\documentclass[a4paper]{amsart}

\usepackage[utf8]{inputenc}
\usepackage{amssymb,amsthm,amsmath}
\usepackage{hyperref}
\usepackage{enumerate}
\usepackage{enumitem}
\usepackage[nobysame]{amsrefs}
\usepackage{tikz}


\newtheorem{theorem}{Theorem}[section]

\newtheorem{lemma}[theorem]{Lemma}

\theoremstyle{definition}

\theoremstyle{remark}

\renewcommand{\emptyset}{\varnothing}

\title{The pairwise distributive law of semilattice congruences}
\author[]{Fernando Martin-Maroto}
\email{fmmaroto@gmail.com} 
\author[]{Antonio Ricciardo}
\author[]{Gonzalo G. de Polavieja}
\address{Addresses of authors 1,3: Champalimaud Research, Champalimaud Foundation, Lisbon, Portugal}
\address{Addresses of author 2: Algebraic AI, Madrid, Spain}

\subjclass{Primary 06A12; Secondary 06-08  08B26}
 \keywords{semilattices, congruences, atomizations}

\begin{document}

\begin{abstract}
 We show that the congruence lattice of a semilattice satsifies a form of distributivity relative to principal congruences of the form $ \Theta_{t \odot s, s}$.  Particularly, we establish that semilattice congruences obey the ``pairwise distributive law":
\[
(\cap_{i \in w} \Omega_{i}) \vee  \Theta_{t \odot s, s} = \cap_{k,r \in w}  \big(  (\Omega_{k}  \cap \Omega_{r}) \vee  \Theta_{t \odot s, s}  \big)
\] 
for any family of congruences $\{ \Omega_{i} : i\in w \}$, with $w$ a possibly infinite set. 
\end{abstract}

\date{\today}
\maketitle

\section{Introduction}\label{section:introduction}

The study of congruence lattices occupies a central role in universal algebra \cite{BurSan81}. The lattice of congruences encodes the ways in which an algebra can be decomposed into simpler quotients, and its investigation has traditionally centered on structural properties such as modularity, distributivity, and permutability, which in turn characterize entire algebraic varieties \cite{BurSan81, McKen87}.   

Semilattice congruences are well understood \cite{Pap64}. They form congruence lattices that are semimodular, pseudocomplemented, and locally distributive, yet satisfy no nontrivial lattice identities \cite{FreNat79}. In this work, we show that the lattice of semilattice congruences does satisfy certain identities and displays a form of pairwise distributive behavior with respect to specific principal congruences. The identities we found involve $ \Theta_{t \odot s, s}$, which explicitly reference terms containing the idempotent semilattice operation. Consequently, these identities are not pure lattice identities, i.e., identities expressed solely in the language of lattice theory, and therefore do not contradict the result established in \cite{FreNat79}.

We provide a proof of the pairwise distributive law (Theorem  \ref{PWDlaw}), by first proving a result for maximal congruences, which is then generalized to arbitrary congruences. Additionally, we present a characterization of principal congruences in semilattices, given by Theorem \ref{lema:congFormula}, which can be used to establish an independent proof for  \ref{PWDlaw}.  \\

\section{Pairwise distributive law for maximal congruences}\label{section:proofs}

Without loss of generality, we assume join semilattices. All proofs apply analogously to meet semilattices upon reversing the order relation.

\bigskip

\begin{theorem} \label{semilattcieCongreuenceCrosssing} 
Let $S$ a semilattice,  $t$ and $s$ two elements of $S$, and  $\{ \phi_{i} : i \in \nu \}$ and $\{ \psi_{j} : j \in \mu \}$  two possibly infinite families of maximal congruences of $S$ with $\nu$ and $\mu$ non empty sets, such that $\forall i ((t \odot s, s) \in \phi_{i}) \,\,\, and  \,\,\, \forall j ((t \odot s, s) \not\in \psi_{j})$, then:
\[
\big( (\cap_{i \in \nu} \phi_{i})  \cap (\cap_{j \in \mu} \psi_{j}) \big) \vee  \Theta_{t \odot s, s} = \cap_{i \in \nu} \cap_{j \in \mu} \big(  (\phi_{i}  \cap \psi_{j}) \vee  \Theta_{t \odot s, s}  \big).
\] 
\end{theorem}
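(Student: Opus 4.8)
The plan is to prove the inclusion $\supseteq$, the inclusion $\subseteq$ being immediate. Write $c:=t\odot s$, $\Phi:=\bigcap_{i\in\nu}\phi_i$, $\Psi:=\bigcap_{j\in\mu}\psi_j$, and let $\mathbf 1=S\times S$ denote the full congruence. For $\subseteq$: for every $(i,j)$ we have $\Phi\cap\Psi\subseteq\phi_i\cap\psi_j$, hence $(\Phi\cap\Psi)\vee\Theta_{c,s}\subseteq(\phi_i\cap\psi_j)\vee\Theta_{c,s}$ since $\beta\mapsto\beta\vee\Theta_{c,s}$ is monotone, and intersecting over $(i,j)$ places the left-hand side inside the right-hand side. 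For $\supseteq$ I would exploit that the only subdirectly irreducible semilattice is the two-element semilattice $\mathbf 2$, so in the (algebraic) congruence lattice every congruence is a meet of maximal congruences. It therefore suffices to show that each maximal congruence $M$ lying above the left-hand side also lies above the right-hand side; since the left-hand side is already contained in the right-hand side, this forces equality.

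Three facts set up the analysis. First, $(c,s)\in\phi_i$ gives $\Theta_{c,s}\subseteq\phi_i$, hence $\Theta_{c,s}\subseteq\Phi$, so both sides contain $\Theta_{c,s}$. Second, from $c=t\odot s\geq s$ one checks (this being the content of Theorem \ref{lema:congFormula}) that $u\,\Theta_{c,s}\,v$ holds exactly when $u=v$ or when $u,v\geq s$ and $u\vee c=v\vee c$; in particular $(w\vee s,\,w\vee c)\in\Theta_{c,s}$ for every $w$. Third, each $\psi_j$ is maximal with $(c,s)\notin\psi_j$, so in its two-block (prime ideal, prime filter) decomposition $s$ sits in the ideal and $c$ in the filter; passing to $S/\psi_j\cong\mathbf 2$ makes $\bar s$ the least and $\bar c$ the greatest element, and collapsing least with greatest in a join-semilattice produces the full congruence by the formula above, so $\psi_j\vee\Theta_{c,s}=\mathbf 1$. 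The same holds for $\Psi$: in the subdirect embedding $S/\Psi\hookrightarrow\prod_{j}\mathbf 2$ the classes $\bar s$ and $\bar c$ are the least and greatest elements, whence $\Psi\vee\Theta_{c,s}=\mathbf 1$, equivalently $(w,w\vee s)\in\Psi$ and $(w\vee c,c)\in\Psi$ for all $w$.

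Now fix a maximal $M$ above the left-hand side, with defining homomorphism $h\colon S\to\mathbf 2$, and let $f_i,g_j\colon S\to\mathbf 2$ define $\phi_i,\psi_j$. The hypothesis $M\supseteq(\Phi\cap\Psi)\vee\Theta_{c,s}$ unwinds to $h(s)=h(c)$ together with $\Phi\cap\Psi\subseteq M$ (that is, $h$ is determined by the whole family $(f_i)_i,(g_j)_j$), while by construction $f_i(s)=f_i(c)$ and $g_j(s)=0\neq 1=g_j(c)$. A natural sufficient target is to produce one pair $(i,j)$ with $\phi_i\cap\psi_j\subseteq M$, i.e.\ with $h$ a function of $(f_i,g_j)$ alone; then $(\phi_i\cap\psi_j)\vee\Theta_{c,s}\subseteq M$, and a fortiori the right-hand side lies in $M$.

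This last step is the crux and the main obstacle, and it is genuinely semilattice-theoretic: maximal congruences of a semilattice are not meet-prime (already in the four-element diamond $M_2\cap M_3$ is the identity congruence, which lies below $M_1$ although neither $M_2$ nor $M_3$ does), so the index $(i,j)$ cannot be extracted from meet-irreducibility. I expect to argue directly from the block picture, splitting on whether $s,c$ lie in the lower or the upper block of $M$ and using that every $\psi_j$ places $s$ low and $c$ high while $h$ keeps them together; this should constrain how the prime filter of $h$ can cut the fibres of the $f_i$ and $g_j$ enough to locate an admissible $(i,j)$ (with the degenerate case $M=\phi_i$ treated on its own), the infinite families being the delicate point, where one may instead have to prove $M\supseteq$(right-hand side) without a single witnessing term. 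An independent route, bypassing the reduction to maximal congruences, is to substitute the explicit description of $\Theta_{c,s}$ recalled above and verify $\supseteq$ by hand: given a pair in the right-hand side, build an alternating chain between $(\Phi\cap\Psi)$-steps and $\Theta_{c,s}$-steps out of the maps $x\mapsto x\vee s$ and $x\mapsto x\vee c$ furnished by the third fact.
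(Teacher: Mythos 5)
Your preparatory material is sound: the easy inclusion, the explicit description of $\Theta_{t\odot s,s}$ (this is the paper's Theorem \ref{lema:congFormula}), the fact that $\psi_j\vee\Theta_{t\odot s,s}=\nabla$ and even $(\cap_j\psi_j)\vee\Theta_{t\odot s,s}=\nabla$ (the paper's Lemma \ref{semilattcieCongreuenceCrosssingNoPsiPsi}), and the reduction via Papert's theorem to showing that every maximal congruence $M$ above the left-hand side contains the right-hand side. But the proof stops exactly where the theorem begins: that core step is never carried out. Your proposed device --- producing a single pair $(i,j)$ with $\phi_i\cap\psi_j\subseteq M$ --- is left as a hope; as you yourself observe, maximal semilattice congruences are not meet-prime, so no general lattice-theoretic principle supplies such a pair, and it is not even clear one exists when $\nu$ and $\mu$ are infinite. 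Your stated fallback, to ``prove $M\supseteq$ (right-hand side) without a single witnessing term,'' is precisely the hard content of the theorem transported to $M$, and nothing in the proposal discharges it. The ``independent route'' in the last sentence is likewise a one-sentence sketch. So what you have is a plan whose crux is missing, not a proof.

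For contrast, the paper's proof never introduces auxiliary maximal congruences above the left-hand side; it works element-wise with the given families, and the maximality hypothesis is exploited on the $\psi_j$ themselves. Fix $(p\odot q,q)$ in the right-hand side. Since $\Theta_{t\odot s,s}\subseteq\phi_i$, membership in $(\phi_i\cap\psi_j)\vee\Theta_{t\odot s,s}$ already forces $(p\odot q,q)\in\phi_i$ for every $i$. If $(p\odot q,q)\in\psi_j$ for all $j$ one is done; otherwise, for a failing index $j^*$, an alternating-chain argument (any nontrivial link of $\Theta_{t\odot s,s}$ involves terms above $s$) yields $(q,q\odot s)\in\phi_i\cap\psi_{j^*}$ for every $i$, and then the fact that each $S/\psi_j$ has exactly two classes, with $s$ at the bottom and $t\odot s$ at the top, pins down the only two possible configurations: $S/\psi_j\models(s=q\lneq p\odot q=t\odot s)$ or $S/\psi_j\models(s\le q=p\odot q\le t\odot s)$. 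Intersecting over all indices gives $q=q\odot s$ and $p\odot q\le q\odot t\odot s$ modulo $(\cap_i\phi_i)\cap(\cap_j\psi_j)$, so joining with $\Theta_{t\odot s,s}$ collapses $q$ and $p\odot q$, which is membership in the left-hand side. If you want to salvage your approach, this two-class analysis of the given $\psi_j$ is the ingredient your ``block picture'' for $M$ would have to reproduce; the element-wise route avoids the witnessing-pair question altogether and handles infinite families for free.
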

\smallskip
\begin{proof}
The direction $\subseteq$ is true on every congruence lattice; Indeed, for every $i$ and $j$, we have $(\cap_{i \in \nu} \phi_{i})  \cap (\cap_{j \in \mu} \psi_{j})  \subseteq \phi_{i}  \cap \psi_{j}$, and the disjunction of both sides with $\Theta_{t \odot s, s}$ preserves the inclusion relation. 

To prove the direction $\supseteq$, we must show that every pair of elements $(p, q)$ that is in the congruence at the right-hand side is also in the congruence at the left-hand side. This is equivalent to proving that if $(p \odot q, q)$ and $(p \odot q, p)$ are in the right-hand side, they are also in the left. 

Let $(p \odot q, q) \in \cap_{i \in \nu} \cap_{j \in \mu} \big(  (\phi_{i}  \cap \psi_{j}) \vee  \Theta_{t \odot s, s}  \big)$, then for every $i$ and every $j$ we have $(p \odot q, q) \in  (\phi_{i}  \cap \psi_{j}) \vee  \Theta_{t \odot s, s}$ which implies $(p  \odot q, q) \in  \psi_{j} \vee  \Theta_{t \odot s, s}$ and $(p  \odot q, q) \in  \phi_{i} \vee  \Theta_{t \odot s, s}$.  From $\Theta_{t \odot s, s} \subseteq \phi_{i}$ we get $(p  \odot q, q) \in \cap_{i \in \nu} \phi_{i}$.  \\
Additionally, from our assumption $\forall i ((t \odot s, s) \in \phi_{i})$ follows $(t \odot s, s) \in \cap_{i \in \nu} \phi_{i}$.
Suppose $(p \odot q, q) \in  \psi_{j}$  for every $j$. In this case, $(p  \odot q, q) \in \cap_{j \in \mu} \psi_{j}$. Therefore, $(p  \odot q, q) \in \cap_{i \in \nu} \cap_{j \in \mu} \psi_{j}$, and the result follows. \\
Suppose there is at least one value $j^*$ for which $(p  \odot q, q) \not\in  \psi_{j^*}$.  The inclusion $(p  \odot q, q) \in  (\phi_{i}  \cap \psi_{j^*})  \vee  \Theta_{t \odot s, s}$ implies that there is a sequence of terms that starts in $p \odot q$, includes terms $x$, $y$ and ends in $q$, such that:
\[
p \odot q ...  (\phi_{i}  \cap \psi_{j^*})  x  \Theta_{t \odot s, s} y   (\phi_{i}  \cap \psi_{j^*}) q,
\] 
where terms $q$ and $y$ may be equal. Since $(p  \odot q, q) \not\in \psi_{j^*}$, the congruence $\Theta_{t \odot s, s}$ must be used in a non-trivial manner at some point along the alternating sequence. In fact, we can simplify the sequence by removing terms at the links where $\Theta_{t \odot s, s}$ is used trivialy, so we can assume without loss of generality that every time $\Theta_{t \odot s, s}$ appears in the sequence it does it in a non-trivial way; i.e. $x$ must be a different term than $y$, which implies $S \models (s \leq y)$. Since  $S/(\phi_{i}  \cap \psi_{j^*}) \models (y = q)$ then $(q, q \odot s) \in \phi_{i}  \cap \psi_{j^*}$ or, equivalently, $S/(\phi_{i}  \cap \psi_{j^*}) \models (s \leq q)$.\\
 Furthermore, this is true for every value of $i$.  Indeed, we have $(q, \, q \odot s) \in \phi_{i}$ for every $\phi_i$ and, as shown above, $(t \odot s, s) \in \phi_{i}$ and $(p  \odot q, q) \in \phi_{i}$, so $S/(\cap_{i \in \nu} \phi_{i}) \,\, \models \,\, (t \odot s = s  \leq p  \odot q = q),$ which implies: \[
S/(\cap_{i \in \nu}  \phi_{i}) \,\, \models \,\, (q = q \odot s = q  \odot t \odot s = p \odot q).
\]
For values of $j$ like $j^*$, i.e. those for which $(p  \odot q, q) \not\in \psi_{j}$, the quotient satisfies $S/\psi_{j} \,\models \, ((s \lneq t \odot s)  \text{ and } (q \lneq p  \odot q ))$ and, since maximal semilattice congruences have only two classes, we must have  $S/\psi_{j} \,\, \models \,\, (s = q \lneq p  \odot q =  t \odot s).$ On the other hand, if $(p  \odot q, q) \in \psi_{j}$, we get $S/\psi_{j} \, \models \, ((s \lneq t \odot s)  \text{ and } (s \leq  q = p  \odot q   \leq t \odot s) )$, which again follows from the fact that maximal congruences only have two classes. There is at least one value $j^*$, so $S/(\cap_{j \in \mu} \psi_{j}) \,\, \models \,\, (s \leq  q \lneq  p  \odot q   \leq t \odot s)$ from which it follows that:
\[
S/(\cap_{j \in \mu} \psi_{j}) \,\, \models \,\,  (q = q  \odot s \lneq q \odot t \odot s \geq p \odot q).  
\]
Therefore,
\[
S/(\cap_{i \in \nu}  \phi_{i} \cap_{j \in \mu} \psi_{j}) \,\, \models \,\, (q = q \odot s \lneq q  \odot t \odot s \geq p \odot q).
\]
Since $\Theta_{t \odot s, s}$ equates $t \odot s$ and $s$:
\[
S/((\cap_{i \in \nu}  \phi_{i} \cap_{j \in \mu} \psi_{j}) \vee \Theta_{t \odot s, s})  \,\, \models \,\, (q = q \odot s = q  \odot t \odot s = p \odot q),
\]
so $(p \odot q, q) \in \big( (\cap_{i \in \nu} \phi_{i})  \cap (\cap_{j \in \mu} \psi_{j}) \big) \vee  \Theta_{t \odot s, s}$ as we wanted to show. 
\end{proof}
\bigskip

This result is closely related to the Full Crossing operation, a computational tool introduced in \cite{MarPol21} and \cite{infAS}, and may be viewed as its counterpart within the congruence lattice.

\section{Pairwise Distributive Law}\label{section:pairwiseLaw}

\begin{lemma} \label{onePsiOnly}  
Let $S$ be a semilattice, and $t$ and $s$ two elements of $S$. Let $\{\Phi_{i} : i \in \nu \}$ be a possibly infinite family of arbitrary congruences of $S$ with $\nu$ a non-empty set, such that $\forall i ((t \odot s, s) \in \Phi_{i})$ and $\Psi$ any congruence  that satisfies $(t \odot s, s) \not\in \Psi$: 
\[
\big( (\cap_{i \in \nu} \Phi_{i}) \cap \Psi \big) \vee  \Theta_{t \odot s, s} = \cap_{i \in \nu} \big(  (\Phi_{i}  \cap \Psi) \vee  \Theta_{t \odot s, s}  \big).
\] 
\end{lemma}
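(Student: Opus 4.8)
The inclusion $\subseteq$ holds in every congruence lattice by the same monotonicity argument as in Theorem \ref{semilattcieCongreuenceCrosssing}, so the real task is $\supseteq$, and the plan is to reduce the whole statement to Theorem \ref{semilattcieCongreuenceCrosssing}. The tool that makes the reduction possible is the standard subdirect representation of semilattices: every join semilattice separates points by homomorphisms onto the two-element semilattice, since each principal ideal $\downarrow b$ is prime (if $a\not\leq b$ then $b\in\,\downarrow b$ and $a\notin\,\downarrow b$). Applying this inside the quotient $S/\theta$ and pulling back along the correspondence theorem shows that every congruence $\theta$ of $S$ equals $\bigcap\{M : M \text{ maximal},\ \theta\subseteq M\}$. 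Throughout I write $\Theta$ for $\Theta_{t\odot s,s}$. The decisive observation is that if $(t\odot s,s)\in\theta$ then $(t\odot s,s)\in M$ for every maximal $M\supseteq\theta$; hence every maximal congruence above any $\Phi_i$ contains $(t\odot s,s)$, whereas $(t\odot s,s)\notin\Psi$ forces at least one maximal congruence above $\Psi$ to omit it.

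Next I would fix representations. Let $\{\phi_k : k\in K\}$ be the family of all maximal congruences lying above at least one $\Phi_i$; reordering the big intersection gives $\bigcap_{i\in\nu}\Phi_i = \bigcap_{k\in K}\phi_k$, with every $\phi_k$ containing $(t\odot s,s)$. Write $\Psi=\bigcap_{l\in L}\psi_l$ with each $\psi_l$ maximal, and split $L=L_{\mathrm{in}}\sqcup L_{\mathrm{out}}$ according to whether $\psi_l$ contains $(t\odot s,s)$, where $L_{\mathrm{out}}\neq\emptyset$. Then $(\bigcap_i\Phi_i)\cap\Psi$ is the intersection of the ``in'' family $\{\phi_k\}\cup\{\psi_l:l\in L_{\mathrm{in}}\}$ together with the nonempty ``out'' family $\{\psi_l:l\in L_{\mathrm{out}}\}$, so Theorem \ref{semilattcieCongreuenceCrosssing} rewrites the left-hand side of the Lemma as the double intersection of $(\alpha\cap\beta)\vee\Theta$ over all ``in'' congruences $\alpha$ and all ``out'' congruences $\beta$.

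For the right-hand side I would invoke Theorem \ref{semilattcieCongreuenceCrosssing} once per index $i$: using $\Phi_i=\bigcap\{\phi : \phi \text{ maximal},\ \Phi_i\subseteq\phi\}$ and the same splitting of $\Psi$, the congruence $(\Phi_i\cap\Psi)\vee\Theta$ equals the double intersection of $(\alpha\cap\beta)\vee\Theta$ over the ``in'' family $\{\phi:\phi\text{ maximal},\ \Phi_i\subseteq\phi\}\cup\{\psi_l:l\in L_{\mathrm{in}}\}$ and the same ``out'' family $\{\psi_l:l\in L_{\mathrm{out}}\}$. Intersecting over $i\in\nu$ and using the identity $\bigcup_{i}\{\phi\text{ maximal}:\Phi_i\subseteq\phi\}=\{\phi_k:k\in K\}$ (the $L_{\mathrm{in}}$ terms simply repeat, and intersection is idempotent) collapses the per-$i$ ``in'' families into the single ``in'' family of the previous paragraph, so the right-hand side becomes term-for-term the same double intersection. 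This yields the equality.

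The one genuine point requiring care — and the step I expect to be the main obstacle — is matching the index collections on the two sides exactly, since a maximal congruence above $\bigcap_i\Phi_i$ need not lie above any single $\Phi_i$. This is precisely why I represent $\bigcap_i\Phi_i$ through the maximal congruences above the individual $\Phi_i$, so that the union of the per-$i$ families is literally $\{\phi_k\}$, rather than through the maximal congruences above the intersection. Degenerate cases must then be dispatched directly: if some $\Phi_i$ equals the full congruence $S\times S$ its ``in'' family may be empty, but then $\Phi_i\cap\Psi=\Psi$ and the term $(\Phi_i\cap\Psi)\vee\Theta=\Psi\vee\Theta$ contributes nothing new; and if $\bigcap_i\Phi_i=S\times S$ both sides reduce outright to $\Psi\vee\Theta$.
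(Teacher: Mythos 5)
Your proposal is correct and follows essentially the same route as the paper's own proof: decompose each congruence into maximal congruences (Papert's theorem, which the paper cites rather than re-proves), apply Theorem \ref{semilattcieCongreuenceCrosssing} once per index $i$, merge the resulting double intersections, and apply the theorem again in the reverse direction. Your explicit treatment of the degenerate cases where some $\Phi_i = \nabla$ (so its family of maximal congruences above it is empty and the theorem's non-emptiness hypothesis fails) is a point of care the paper glosses over, but it does not change the substance of the argument.
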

\smallskip
\begin{proof}
Every congruence of $S$ is equal to the intersection of maximal congruences of $S$ \cite{Pap64}. Replace each arbitrary congruence for a, possibly infinite, intersection:  \[
\Phi_i = \cap_{a_i} \phi_{a_i} \,\,\,\, \text{and}   \,\,\,\, \Psi = \cap_{d} \, \phi_{d}  \cap_{e} \psi_{e},
\]
where $a_i, d$ and $e$ are index families (sets), each $\phi$ and $\psi$ are maximal congruences satisfying $ (t \odot s, s) \in \phi$ and $(t \odot s, s) \not\in \psi$, and where there is at least one $\psi_{e}$ while the index family $d$ may be empty.  We write $\Psi = \cap_{d} \, \phi_{d}  \cap_{e} \psi_{e}$ instead of $\Psi = (\cap \{ \phi_{i} : i \in d \}) \cap (\cap \{ \psi_{j} : j \in e \})$ for reduced clutter. \\
Applying Theorem \ref{semilattcieCongreuenceCrosssing} to each $ (\Phi_{i}  \cap \Psi) \vee  \Theta_{t \odot s, s} $: \[
(\Phi_{i}  \cap \Psi) \vee  \Theta_{t \odot s, s} = (\cap_{a_i} \phi_{a_i}  \cap_{d} \phi_{d}  \cap_{e} \psi_{e}) \vee  \Theta_{t \odot s, s} = \cap_{e} \cap_{g_{i}} \big(  (\phi_{g_{i}}  \cap \psi_{e}) \vee  \Theta_{t \odot s, s}  \big)
\] where $g_{i} = a_i  \cup d$.
For their intersection, we get: \[
\cap_{i \in \nu} \big( (\Phi_{i}  \cap \Psi) \vee  \Theta_{t \odot s, s} \big) = \cap_{i \in \nu}  \cap_{e} \cap_{g_{i}} \big(  (\phi_{g_{i}}  \cap \psi_{e}) \vee  \Theta_{t \odot s, s}  \big) =
\] 
and applying Theorem \ref{semilattcieCongreuenceCrosssing} again:\[
=    \big(  \cap_{i \in \nu} \  \cap_{e} \cap_{g_{i}} (\phi_{g_{i}}  \cap \psi_{e})  \big) \vee  \Theta_{t \odot s, s}   =   \big( (\cap_{i \in \nu} \Phi_{i}) \cap \Psi \big) \vee  \Theta_{t \odot s, s}. \] 
The result holds even if $\nu$ and all the index families $a_i, d$ and $e$ are infinite since changing the order of set intersections does not alter its limit.
\end{proof}

\bigskip

\bigskip

\begin{lemma} \label{generalizedSemilattcieCongreuenceCrosssing} 
Let $S$ be a semilattice,  $t$ and $s$ elements of $S$, and  $\{\Phi_{i} : i \in \nu \}$ and  $\{\Psi_{j} : j \in \mu \}$ two possibly infinite families of arbitrary congruences of $S$ with  $\nu$ a possibly empty set and $\mu$ a non-empty set, such that $\forall i ((t \odot s, s) \in \Phi_{i})$ and  $\forall j ((t \odot s, s) \not\in \Psi_{j})$, then:
\begin{align*}
\big( ( \cap_{i \in \nu} \Phi_{i}) \cap ( \cap_{j \in \mu} \Psi_j) \big) \vee \Theta_{t \odot s, s} 
&= \big( \cap_{i \in \nu} \cap_{j \in \mu} ( (\Phi_{i} \cap \Psi_{j}) \vee \Theta_{t \odot s, s} ) \big) \cap \\
&\quad \cap \big(   \cap_{j,j^{\prime} \in \mu :  j^{\prime}\not=j } ( (\Psi_{j^{\prime}} \cap \Psi_{j}) \vee \Theta_{t \odot s, s}) \big) .
\end{align*}
\end{lemma}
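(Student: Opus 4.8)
The inclusion $\subseteq$ is immediate and holds in any congruence lattice, exactly as in Theorem~\ref{semilattcieCongreuenceCrosssing}: the congruence $(\cap_{i\in\nu}\Phi_i)\cap(\cap_{j\in\mu}\Psi_j)$ is contained in each $\Phi_i\cap\Psi_j$ and in each $\Psi_{j'}\cap\Psi_j$, and joining with $\Theta_{t\odot s,s}$ preserves inclusions, so the left-hand side lies inside every factor of the right-hand side and hence inside their intersection. All the work is therefore in the reverse inclusion, and my plan is to reduce it to the maximal case already settled, using the decomposition technique of Lemma~\ref{onePsiOnly}. By \cite{Pap64} I would write $\Phi_i=\cap_{a_i}\phi_{a_i}$ and $\Psi_j=\cap_{d_j}\phi_{d_j}\cap_{e_j}\psi_{e_j}$, where every $\phi$ is maximal with $(t\odot s,s)\in\phi$, every $\psi$ is maximal with $(t\odot s,s)\notin\psi$, and each index family $e_j$ is non-empty.

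Collecting the maximal factors, I would set $P=(\cup_i a_i)\cup(\cup_j d_j)$ and $Q=\cup_j e_j$, so that $(\cap_{i\in\nu}\Phi_i)\cap(\cap_{j\in\mu}\Psi_j)=(\cap_{p\in P}\phi_p)\cap(\cap_{q\in Q}\psi_q)$ with $Q\neq\emptyset$. Assuming $P\neq\emptyset$, a single application of Theorem~\ref{semilattcieCongreuenceCrosssing} to these two full families rewrites the left-hand side as
\[
\big((\cap_{i\in\nu}\Phi_i)\cap(\cap_{j\in\mu}\Psi_j)\big)\vee\Theta_{t\odot s,s}=\cap_{p\in P}\cap_{q\in Q}\big((\phi_p\cap\psi_q)\vee\Theta_{t\odot s,s}\big).
\]
It then suffices to produce, for every pair $(p,q)\in P\times Q$, a factor of the right-hand side of the lemma that is contained in $(\phi_p\cap\psi_q)\vee\Theta_{t\odot s,s}$; the right-hand side lies inside each such factor, hence inside their intersection, which is the left-hand side.

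To do this I would fix a pair, write $q\in e_j$ (so that $\psi_q\supseteq\Psi_j$), and split according to the origin of $\phi_p$. If $p\in a_i$ then $\phi_p\supseteq\Phi_i$, whence $\phi_p\cap\psi_q\supseteq\Phi_i\cap\Psi_j$ and the mixed factor $(\Phi_i\cap\Psi_j)\vee\Theta_{t\odot s,s}$ serves. If $p\in d_{j'}$ with $j'\neq j$ then $\phi_p\supseteq\Psi_{j'}$, whence $\phi_p\cap\psi_q\supseteq\Psi_{j'}\cap\Psi_j$ and the factor $(\Psi_{j'}\cap\Psi_j)\vee\Theta_{t\odot s,s}$ serves. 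The case $p\in d_j$, where $\phi_p$ and $\psi_q$ descend from the \emph{same} $\Psi_j$, is the one I expect to be the crux: here only $\phi_p\cap\psi_q\supseteq\Psi_j$ is available, so I must bound $\Psi_j\vee\Theta_{t\odot s,s}$ from below by a factor of the right-hand side. This succeeds precisely because the family carries a second congruence: if $\nu\neq\emptyset$ then $(\Phi_i\cap\Psi_j)\vee\Theta_{t\odot s,s}\subseteq\Psi_j\vee\Theta_{t\odot s,s}$, and otherwise $(\Psi_{j'}\cap\Psi_j)\vee\Theta_{t\odot s,s}\subseteq\Psi_j\vee\Theta_{t\odot s,s}$ for any $j'\neq j$. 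This is also the structural reason no $\Phi$--$\Phi$ factors appear on the right: the only genuinely new pairs created by the decomposition live inside a single $\Psi_j$, and each is absorbed the moment a second congruence is available.

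It remains to record the boundary configurations. When $P=\emptyset$ --- that is, $\nu=\emptyset$ and every $\Psi_j$ is an intersection of maximal congruences avoiding $(t\odot s,s)$ --- Theorem~\ref{semilattcieCongreuenceCrosssing} cannot be applied to the left-hand side, and I would instead replay its term-rewriting argument directly on the $\psi$-families; the mechanism by which $\Theta_{t\odot s,s}$ forces $s\leq q$ in the relevant quotients and collapses the terms does not use the presence of any $\phi$. The single genuinely degenerate case is $\nu=\emptyset$ together with $|\mu|=1$, in which the internal pair of the previous paragraph has no second congruence to absorb it; this configuration is vacuous for the intended application and is excluded.
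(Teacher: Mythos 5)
Your proof follows the same overall strategy as the paper's --- decompose every congruence into maximal ones via \cite{Pap64} and lean on Theorem \ref{semilattcieCongreuenceCrosssing} --- but you organize the hard inclusion differently, and the difference is worth recording. The paper expands each factor of the right-hand side through Theorem \ref{semilattcieCongreuenceCrosssing}, checks that the terms $(\phi\cap\psi)\vee\Theta_{t\odot s,s}$ so produced exhaust all pairs of maximal congruences involved, and then applies the theorem once more, in the collapsing direction, to reassemble the left-hand side. You apply the theorem exactly once, to the left-hand side, and then handle each resulting term $(\phi_p\cap\psi_q)\vee\Theta_{t\odot s,s}$ by exhibiting a single factor of the right-hand side beneath it; everything else is monotonicity of $\cap$ and $\vee$, and your three-way case split on the origin of $\phi_p$ (from some $a_i$, from $d_{j'}$ with $j'\neq j$, or from $d_j$ itself) is exhaustive and correct. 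This organization buys something concrete: the paper's route applies Theorem \ref{semilattcieCongreuenceCrosssing} to factors $(\Psi_j\cap\Psi_{j'})\vee\Theta_{t\odot s,s}$ whose decomposition may contain no $\phi$ whatsoever (when $d_j=d_{j'}=\emptyset$), a configuration the theorem's hypotheses ($\nu$ non-empty) do not cover, whereas your argument never expands a factor. Your $P=\emptyset$ case, which you propose to settle by replaying the term-rewriting argument, is precisely the paper's subsequent Lemma \ref{semilattcieCongreuenceCrosssingNoPsiPsi}, proved there directly from the two-class structure of maximal congruences; once the left-hand side is $\nabla$, the reverse inclusion is vacuous.

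Your closing caveat deserves emphasis, because it identifies a real defect that the paper glosses over. If $\nu=\emptyset$ and $\vert\mu\vert=1$, the right-hand side of the lemma is an intersection over an empty index set, hence $\nabla$, while the left-hand side is $\Psi_1\vee\Theta_{t\odot s,s}$, which need not be $\nabla$: take $S=\{a,b,\top\}$ with $a\odot b=\top$, $t=a$, $s=b$, and $\Psi_1=\Delta$; then the left-hand side is $\Theta_{\top,b}$, which has the two classes $\{a\}$ and $\{b,\top\}$. So the statement as written is false in this corner, and the corner is permitted by the hypotheses ($\nu$ may be empty, $\mu$ need only be non-empty). The paper's own proof does not escape this: it disposes of $\vert\mu\vert=1$ by citing Lemma \ref{onePsiOnly}, whose hypotheses require $\nu\neq\emptyset$. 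Strictly speaking you cannot simply declare a permitted configuration ``excluded'' --- a proof must cover everything the hypotheses allow --- but the fault here lies in the statement, which should require $\nu\neq\emptyset$ or $\vert\mu\vert\geq 2$; under that amendment your argument is complete. (Theorem \ref{PWDlaw} is unaffected by this, since its right-hand side ranges over all pairs $k,r\in w$ including $k=r$, so for a singleton family the identity is trivial.)
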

\smallskip
\begin{proof}
As in the proof of \ref{onePsiOnly}, we replace each arbitrary congruence for a, possibly infinite, intersection:  \[
\Phi_i = \cap_{a_i} \phi_{a_i} \,\,\,\, \text{and}   \,\,\,\, \Psi_j = \cap_{d_j} \, \phi_{d_j}  \cap_{e_j} \psi_{e_j},
\]
where $a_i, d_j$ and $e_j$ are index families, each $\phi$ and $\psi$ are maximal congruences satisfying $ (t \odot s, s) \in \phi$ and $(t \odot s, s) \not\in \psi$ and where for each $\Psi_j$ there is at least one $\psi_{e_j}$ while $d_j$ may be empty. \\
Lemma  \ref{onePsiOnly} covers the case $\vert \mu \vert = 1$, so we can assume $\vert \mu \vert \geq 2$. \\  
Applying Theorem \ref{semilattcieCongreuenceCrosssing} to $ (\Psi_{j}  \cap \Psi_{j^{\prime}}) \vee  \Theta_{t \odot s, s} $: \[
(\Psi_{j}  \cap  \Psi_{j^{\prime}}) \vee  \Theta_{t \odot s, s} = ( \cap_{d_j} \phi_{d_j}  \cap_{e_j} \psi_{e_j}  \cap_{d_j^{\prime}} \phi_{d_j^{\prime}}  \cap_{e_j^{\prime}} \psi_{e_j^{\prime}}  ) \vee  \Theta_{t \odot s, s} = 
\] \[ = \cap_{d_{j, j^{\prime}}} \cap_{ e_{j, j^{\prime}}  } \big(  (\phi_{d_{j, j^{\prime}}}  \cap \psi_{e_{j, j^{\prime}}}) \vee  \Theta_{t \odot s, s}  \big)
\] where $d_{j, j^{\prime}} = d_j  \cup d_j^{\prime}$ and $e_{j, j^{\prime}} = e_j  \cup e_j^{\prime}$. \\
Suppose $\nu = \emptyset$. To calculate the intersection, consider that every pair $(\phi \cap \psi)\vee  \Theta_{t \odot s, s}$ is present, so we can apply Theorem \ref{semilattcieCongreuenceCrosssing}: 
\[
 \cap_{j,j^{\prime} \in \mu :  j^{\prime}\not=j }  (\Psi_{j}  \cap  \Psi_{j^{\prime}}) \vee  \Theta_{t \odot s, s} =  \cap_{j,j^{\prime} \in \mu :  j^{\prime}\not=j } \cap_{d_{j, j^{\prime}}} \cap_{ e_{j, j^{\prime}}  } \big(  (\phi_{d_{j, j^{\prime}}}  \cap \psi_{e_{j, j^{\prime}}}) \vee  \Theta_{t \odot s, s}  \big) =
 \] \[
=  \big(   \cap_{j,j^{\prime} \in \mu :  j^{\prime}\not=j } \cap_{d_{j, j^{\prime}}} \cap_{ e_{j, j^{\prime}}  }  (\phi_{d_{j, j^{\prime}}}  \cap \psi_{e_{j, j^{\prime}}})  \big)  \vee  \Theta_{t \odot s, s} = (\cap_{j \in \mu} \Psi_j) \vee \Theta_{t \odot s, s}. 
 \]
Suppose $\nu  \not=\emptyset $. Applying Theorem \ref{semilattcieCongreuenceCrosssing} to $ (\Phi_{i}  \cap \Psi_{j}) \vee  \Theta_{t \odot s, s} $: \[
(\Phi_{i}  \cap \Psi_{j}) \vee  \Theta_{t \odot s, s} = (\cap_{a_i} \phi_{a_i}  \cap_{d_j} \phi_{d_j}  \cap_{e_j} \psi_{e_j}) \vee  \Theta_{t \odot s, s} = \cap_{e_j} \cap_{g_{i,j}} \big(  (\phi_{g_{i,j}}  \cap \psi_{e_j}) \vee  \Theta_{t \odot s, s}  \big)
\] where $g_{i,j} = a_i  \cup d_j$.
The intersection is:
\[
\big(   \cap_{i \in \nu} \cap_{j \in \mu} \big( (\Phi_{i}  \cap \Psi_{j}) \vee  \Theta_{t \odot s, s} \big)    \big) \cap  \big(  \cap_{j,j^{\prime} \in \mu :  j^{\prime}\not=j }  (\Psi_{j}  \cap  \Psi_{j^{\prime}}) \vee  \Theta_{t \odot s, s}  \big)  =    
 \]\[
=   \cap_{i \in \nu} \cap_{j \in \mu}  \cap_{h_{i}}   \cap_{e_j}  \big(   (\phi_{h_{i}} \cap \psi_{e_j}  )   \vee  \Theta_{t \odot s, s}  \big) =
 \]
where $h_{i} =  a_i  \cup_j d_j $. Again we get every pair $(\phi \cap \psi)\vee  \Theta_{t \odot s, s}$, so using Theorem \ref{semilattcieCongreuenceCrosssing}:\[
=   \big( \cap_{i \in \nu} \cap_{j \in \mu}  \cap_{h_{i}}   \cap_{e_j}    (\phi_{h_{i}} \cap \psi_{e_j}  )  \big)  \vee  \Theta_{t \odot s, s}  = \big( ( \cap_{i \in \nu} \Phi_{i}) \cap ( \cap_{j \in \mu} \Psi_j) \big) \vee \Theta_{t \odot s, s} .\]
The result also holds even if all the index families are infinite.
\end{proof}

\bigskip

Notice that if we apply this theorem to principal congruences we recover Theorem \ref{semilattcieCongreuenceCrosssing} using $(\psi_j \cap \psi_{j^{\prime}}) \vee  \Theta_{t \odot s, s}  = \nabla$.  In  fact we can prove:

\bigskip
\begin{lemma} \label{semilattcieCongreuenceCrosssingNoPsiPsi} 
Let $S$ be a semilattice, $t$ and $s$ two of its elements, and $\{ \psi_{j} : j \in \mu \}$ a non-empty family of maximal congruences of $S$ such that $\forall j ((t \odot s, s) \not\in \psi_{j})$, then \[
( \cap_{j \in \mu} \psi_{j} ) \vee  \Theta_{t \odot s, s} = \nabla. \]  
\end{lemma}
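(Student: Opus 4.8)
The plan is to reduce everything to a structural observation about the two-class quotients $S/\psi_j$ and then to exhibit a short connecting chain that collapses the whole semilattice. Write $\Psi = \cap_{j \in \mu} \psi_{j}$; the goal is to show $\Psi \vee \Theta_{t \odot s, s} = \nabla$, that is, that every pair of elements of $S$ lies in the join.

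First I would pin down where $s$ and $t \odot s$ sit inside each factor. Since $\psi_{j}$ is maximal, $S/\psi_{j}$ has exactly two classes, and because $S$ is a join semilattice we always have $s \leq t \odot s$. As $(t \odot s, s) \not\in \psi_{j}$ these two elements fall into distinct classes, so in the two-element chain $S/\psi_{j}$ the element $s$ must occupy the bottom class and $t \odot s$ the top class. Reading this off for every $x \in S$ gives $(s \odot x, x) \in \psi_{j}$ and $((t \odot s) \odot x, t \odot s) \in \psi_{j}$, and since this holds for all $j$ it holds in the intersection: $(s \odot x, x) \in \Psi$ and $((t \odot s) \odot x, t \odot s) \in \Psi$ for every $x$. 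In other words, modulo $\Psi$ the element $s$ is a global minimum and $t \odot s$ a global maximum.

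With these two facts the collapse is immediate. For an arbitrary $x \in S$ consider the chain
\[
x \;\; \Psi \;\; (s \odot x) \;\; \Theta_{t \odot s, s} \;\; ((t \odot s) \odot x) \;\; \Psi \;\; (t \odot s),
\]
where the middle link uses that $\Theta_{t \odot s, s}$ identifies $s$ and $t \odot s$ and that congruences respect $\odot$. Hence $(x, t \odot s) \in \Psi \vee \Theta_{t \odot s, s}$ for every $x$, and by symmetry and transitivity every pair $(p, q)$ lies in $\Psi \vee \Theta_{t \odot s, s}$, giving $\nabla$.

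The step I expect to carry the real content is the observation that $s$ and $t \odot s$ become a uniform bottom and top modulo $\Psi$; everything after that is a one-line computation. The subtlety worth stressing is that one cannot simply invoke maximality of a single $\psi_{j}$, because the intersection $\Psi$ need not be maximal — it is precisely the two-class structure of each factor, combined with the order relation $s \leq t \odot s$, that forces the extremal placement and makes the argument work uniformly for any, possibly infinite, index set $\mu$.
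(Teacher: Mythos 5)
Your proof is correct and follows essentially the same route as the paper: both arguments rest on the observation that in each two-class quotient $S/\psi_j$ the element $s$ lands in the bottom class and $t \odot s$ in the top class, so that modulo $\cap_{j \in \mu}\psi_j$ they act as a global bottom and top, which $\Theta_{t \odot s, s}$ then identifies. Your explicit chain $x \;\Psi\; (s \odot x)\;\Theta_{t \odot s, s}\;((t\odot s)\odot x)\;\Psi\;(t\odot s)$ just spells out concretely the collapse that the paper states in terms of top and bottom classes of the intersection.
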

\begin{proof}
Every maximal semilattice congruence partitions the elements of $S$ into two classes, so $S/\psi_{j}$ has two distinct elements $a_j$ and $b_j$ that satisfy $a_j \odot b_j = b_j$. Let $\alpha_j$ and $\beta_j$ be the two classes in which $\psi_{j}$ partitions the elements of $S$, corresponding to $a_j$ and $b_j$ respectively.  Since $S/\psi_{j} \,\, \models \,\, (s \lneq t \odot s)$ it is clear that $s  \in \alpha_j$ and  $t \odot s \in \beta_j$. The congruence intersection $\cap_{j \in \mu} \psi_{j}$ may have many classes but the top and bottom classes correspond to $\cap_{j \in \mu} \alpha_{j}$ and $\cap_{j \in \mu} \beta_{j}$ respectively. Since $s \in \cap_{j \in \mu} \alpha_{j}$ and $t \odot s \in \cap_{j \in \mu} \beta_{j}$ it follows that $\Theta_{t \odot s, s}$ equates the top and bottom classes of $\cap_{j \in \mu} \psi_{j}$ and, therefore, $(\cap_{j \in \mu} \psi_{j} ) \vee  \Theta_{t \odot s, s}$ has a single class. 
\end{proof}
\bigskip

We can now establish that congruences are pairwise distributive with respect to the principal congruences $\Theta_{t \odot s, s}$:

\begin{theorem}   \label{PWDlaw}  
Let $S$ be a semilattice,  $t$ and $s$ two elements of $S$ and  $\{\Omega_{i} : i  \in w \}$ a possibly infinite family of congruences of $S$, then:
\[
(\cap_{i \in w} \Omega_{i}) \vee  \Theta_{t \odot s, s} = \cap_{k,r \in w}  \big(  (\Omega_{k}  \cap \Omega_{r}) \vee  \Theta_{t \odot s, s}  \big)
\] 
\end{theorem}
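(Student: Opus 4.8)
The plan is to reduce everything to Lemma~\ref{generalizedSemilattcieCongreuenceCrosssing} by splitting the family according to the single relation that matters, namely membership of the pair $(t\odot s,s)$. Set
\[
\nu=\{\,i\in w:(t\odot s,s)\in\Omega_i\,\},\qquad \mu=\{\,j\in w:(t\odot s,s)\notin\Omega_j\,\},
\]
so that $w=\nu\sqcup\mu$, and relabel $\Phi_i:=\Omega_i$ for $i\in\nu$ and $\Psi_j:=\Omega_j$ for $j\in\mu$. By construction $\forall i\,((t\odot s,s)\in\Phi_i)$ and $\forall j\,((t\odot s,s)\notin\Psi_j)$, which is exactly the hypothesis of Lemma~\ref{generalizedSemilattcieCongreuenceCrosssing}.

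The inclusion $\subseteq$ holds in any congruence lattice, exactly as in Theorem~\ref{semilattcieCongreuenceCrosssing}: for every pair $(k,r)$ we have $\cap_{i\in w}\Omega_i\subseteq\Omega_k\cap\Omega_r$, joining with $\Theta_{t\odot s,s}$ preserves this inclusion, and intersecting over all pairs $(k,r)$ keeps it.

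For the reverse inclusion $\supseteq$ I would first treat the case $\mu=\emptyset$ directly: then $\Theta_{t\odot s,s}\subseteq\Omega_i$ for every $i$, so each join is absorbed and both sides reduce to $\cap_{i\in w}\Omega_i$ (on the right one recovers every $\Omega_i$ from the diagonal pairs $k=r$, and all other pairwise intersections already contain $\cap_{i\in w}\Omega_i$). When $\mu\neq\emptyset$, apply Lemma~\ref{generalizedSemilattcieCongreuenceCrosssing} to the left-hand side:
\[
(\textstyle\cap_{i\in w}\Omega_i)\vee\Theta_{t\odot s,s}=\big(\cap_{i\in\nu}\cap_{j\in\mu}((\Phi_i\cap\Psi_j)\vee\Theta_{t\odot s,s})\big)\cap\big(\cap_{j,j'\in\mu:\,j'\neq j}((\Psi_{j'}\cap\Psi_j)\vee\Theta_{t\odot s,s})\big).
\]
Each factor on the right is literally one of the factors $(\Omega_k\cap\Omega_r)\vee\Theta_{t\odot s,s}$ of the theorem: a mixed factor $(\Phi_i\cap\Psi_j)\vee\Theta_{t\odot s,s}$ is the pair $(k,r)$ with $k\in\nu$, $r\in\mu$, and a factor $(\Psi_{j'}\cap\Psi_j)\vee\Theta_{t\odot s,s}$ is a pair with $k,r\in\mu$, $k\neq r$. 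Since the theorem's right-hand side is an intersection over the full set of pairs in $w\times w$ — which contains this sub-collection — it is contained in the intersection over the sub-collection, and that intersection is exactly the expression above. Thus the theorem's right-hand side is contained in the left-hand side, giving $\supseteq$.

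The main obstacle has already been cleared in Lemma~\ref{generalizedSemilattcieCongreuenceCrosssing}, and ultimately in Theorem~\ref{semilattcieCongreuenceCrosssing}, which is where the genuine interaction between the join with $\Theta_{t\odot s,s}$ and congruence intersections is controlled. Granting those results, the present statement carries no further analytic content: the reverse inclusion is purely formal, being nothing more than the observation that intersecting over more index pairs can only shrink the result. The only factors the theorem adds beyond what the lemma uses are the $\nu$-$\nu$ pairs, for which $(\Phi_k\cap\Phi_r)\vee\Theta_{t\odot s,s}=\Phi_k\cap\Phi_r\supseteq\cap_{i\in\nu}\Phi_i$, and the diagonal pairs $\Omega_k\cap\Omega_k$; these are redundant, so they do not tighten the intersection below the value delivered by Lemma~\ref{generalizedSemilattcieCongreuenceCrosssing}.
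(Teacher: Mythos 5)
Your proof is correct and follows essentially the same route as the paper's: split the family by whether $(t\odot s,s)$ belongs to each congruence, dispose of the case $\mu=\emptyset$ by absorption, and reduce the rest to Lemma~\ref{generalizedSemilattcieCongreuenceCrosssing}. The only (cosmetic) difference is that you obtain the missing inclusion by combining the universally valid $\subseteq$ with the observation that the theorem's right-hand side intersects over a superset of the Lemma's pairs, whereas the paper argues directly that the extra factors (the $\nu$--$\nu$ pairs and diagonals) are redundant, so the two right-hand sides coincide.
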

\begin{proof}
If $(t \odot s, s) \in \Omega_{k}$ then $\Theta_{t \odot s, s} \subseteq \Omega_{k}$. Suppose  $(t \odot s, s) \in \Omega_{k}$ for every $k$; it follows  $(\Omega_{k}  \cap \Omega_{r}) \vee  \Theta_{t \odot s, s} = \Omega_{k}  \cap \Omega_{r}$ and $(\cap_{i \in w} \Omega_{i}) \vee  \Theta_{t \odot s, s} = \cap_{i \in w} \Omega_{i}$, so the identity is true.  Hence, we can assume that at least one maximal congruence satisfies $(t \odot s, s) \not\in \Omega_{i}$ and separate the family $\{\Omega_{i} : i  \in w \}$ into two disjoint families, as in Lemma \ref{generalizedSemilattcieCongreuenceCrosssing}: $\{\Phi_{i} : i \in \nu \}$ and  $\{\Psi_{j} : j \in \mu \}$ with $w = \nu \cup \mu$, the set $\mu$ non-empty and the set $\nu$ possibly empty.   \\
The case $\nu = \emptyset$ is covered by Lemma \ref{generalizedSemilattcieCongreuenceCrosssing}. Assume $\nu$ is not empty.  The identity we aim to prove differs of that of  Lemma \ref{generalizedSemilattcieCongreuenceCrosssing} in the terms $(\Phi_{i}  \cap \Phi_{i^{\prime}}) \vee  \Theta_{t \odot s, s}$, which are absent in the Lemma.  These terms are equal to $\Phi_{i}  \cap \Phi_{i^{\prime}}$. The presence of $(\Phi_{i} \cap \Psi_{j}) \vee \Theta_{t \odot s, s}$ in the intersection on the right hand side of the Lemma implies that this side is a subset of $\Phi_i \vee  \Theta_{t \odot s, s} =\Phi_i$. Since this is true for every $ i \in \nu$, it follows that the right hand sides of both identities are equal.
\end{proof} 
\bigskip

\section{Remark on the role of maximal congruences}\label{section:maximal}

We used Theorem \ref{semilattcieCongreuenceCrosssing} for maximal congruences to prove the general case for arbitrary congruences. One might wonder whether the general case can be proven without relying on maximal congruences. Indeed, it is possible by using Lemma \ref{lema:logical_cosequences}, which can be found in \cite{infAS} (Propositions 3.11 and 3.23). Here, we present an independent proof of Lemma \ref{lema:logical_cosequences}, based on the following theorem:

\bigskip
\begin{theorem}\label{lema:congFormula}
 Let $t$ and $s$ be a pair of elments of a join semilattice $S$ and $\Theta_{t \odot s, s}$ the principal congruence of $S$ that equates $t \odot s = s$:  \[
\Theta_{t \odot s, s} = \{ (a, a) : a\in S \} \cup \{ (a, b) : a,b\in S, \, s \le a, \, s \le b \text{ and } t \odot a = b \odot t  \}.
\]
\end{theorem}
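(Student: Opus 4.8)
The plan is to prove the explicit formula for the principal congruence $\Theta_{t\odot s,s}$ by showing that the set on the right-hand side is itself a congruence containing $(t\odot s,s)$, and then that it is contained in every congruence containing that pair, which forces it to be the smallest such congruence, i.e. the principal one. Write $R = \{(a,a):a\in S\}\cup\{(a,b): s\le a,\ s\le b,\ t\odot a = t\odot b\}$ (note $t\odot a = b\odot t$ is just $t\odot a = t\odot b$ by commutativity). First I would check the pair $(t\odot s,s)$ lies in $R$: both $s\le s$ and $s\le t\odot s$ hold, and $t\odot(t\odot s) = t\odot s = t\odot s$ by idempotency, so indeed $(t\odot s, s)\in R$.

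Next I would verify that $R$ is an equivalence relation compatible with $\odot$. Reflexivity is built in; symmetry is immediate from the symmetry of the defining condition. For transitivity, suppose $(a,b),(b,c)\in R$ with all three distinct from the diagonal; then $s\le a,b,c$ and $t\odot a = t\odot b = t\odot c$, giving $(a,c)\in R$ (the mixed cases involving the diagonal are trivial). For compatibility, assume $(a,b)\in R$ with $a\ne b$ and let $c\in S$ be arbitrary; I must show $(a\odot c, b\odot c)\in R$. Here $s\le a$ gives $s\le a\odot c$ and likewise $s\le b\odot c$, and the join condition propagates since $t\odot(a\odot c) = (t\odot a)\odot c = (t\odot b)\odot c = t\odot(b\odot c)$ using associativity, commutativity, and $t\odot a = t\odot b$. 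Thus $R$ is a congruence containing $(t\odot s,s)$, so $\Theta_{t\odot s,s}\subseteq R$.

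For the reverse inclusion I would take an arbitrary congruence $\theta$ with $(t\odot s,s)\in\theta$ and show $R\subseteq\theta$; applied to $\theta = \Theta_{t\odot s,s}$ this yields $R\subseteq\Theta_{t\odot s,s}$ and hence equality. So let $(a,b)\in R$ with $a\ne b$, meaning $s\le a$, $s\le b$, and $t\odot a = t\odot b$. From $(t\odot s,s)\in\theta$ and compatibility I get $(t\odot s\odot a, s\odot a)\in\theta$; since $s\le a$ we have $s\odot a = a$, so $(t\odot s\odot a, a)\in\theta$, and because $s\le a$ gives $s\odot a = a$ again, $t\odot s\odot a = t\odot a$, so $(t\odot a, a)\in\theta$. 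By the identical argument using $s\le b$ I obtain $(t\odot b, b)\in\theta$. Now $t\odot a = t\odot b$ lets me chain $a\ \theta\ t\odot a = t\odot b\ \theta\ b$, so $(a,b)\in\theta$ by transitivity, as desired.

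The main obstacle, and the step to present most carefully, is establishing that $R$ is genuinely transitive and compatible rather than merely a convenient-looking set: one must confirm that the two defining conditions ($s$ below both coordinates, and equal $t$-products) interact correctly under $\odot$ and under composition of pairs, paying attention to the degenerate cases where one coordinate equals another so that the diagonal part of $R$ is handled consistently. Everything else is a short manipulation using idempotency, associativity, commutativity, and the translation of $s\le x$ into $s\odot x = x$; the genuine content is that the condition $t\odot a = t\odot b$ is exactly what survives quotienting by $\Theta_{t\odot s,s}$ among elements lying above $s$.
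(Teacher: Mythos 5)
Your proof is correct and takes essentially the same approach as the paper: both establish the inclusion $\Theta_{t\odot s,s}\subseteq R$ by verifying that the right-hand set is a congruence containing $(t\odot s,s)$, and both obtain the reverse inclusion from the same computation, namely that $t\odot a\equiv a$ and $t\odot b\equiv b$ modulo any congruence containing $(t\odot s,s)$, chained through $t\odot a=t\odot b$. The only cosmetic difference is that you run this computation against an arbitrary congruence containing the pair (a minimality argument), whereas the paper performs it inside the quotient $S/\Theta_{t\odot s,s}$.
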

\begin{proof} 
Let $\theta = \{ (a, a) : a\in S \} \cup \{ (a, b) : a,b\in S, \, s \le a, \, s \le b \text{ and } t \odot a = b \odot t  \}$. Notice that, when $S\models (t \leq s)$, then $\Theta_{t \odot s, s}  = \theta = \Delta$, the diagonal congruence of $S$. Assume  $S\models (t \not\leq s)$. It is straightforward that $\theta$ is an equivalence relation. Let $u \odot v = w$ and $u^\prime \odot v^\prime = w^\prime$ where $u, v, w, u^\prime,v^\prime, w^\prime \in S$. Suppose, $u \sim_{\theta} u^\prime$ and $v \sim_{\theta}  v^\prime$. Assume first, that either $u \not= u^\prime$ or $v \not= v^\prime$. Then $t \odot w = t \odot u \odot v = t \odot u^\prime \odot v^\prime = t \odot w^\prime$ and also $s\le u \odot v = w$ and $s \le u^\prime \odot v^\prime = w^\prime$. Therefore, $w \sim_{\theta}  w^\prime$. If  $u = u^\prime$ and $v = v^\prime$ then  $w = w^\prime$ . It follows that ${\theta} $ is a semilattice congruence. \\
It is clear that $(t \odot s, s) \in {\theta}$, which implies $\Theta_{t \odot s, s} \subseteq \theta$. Conversely, $(a, b) \in \theta$, requires either $a = b$ or  ($t \odot a = b \odot t$ and $s \le a, b$). If we make $t \le s$, these identities imply $a = b$. Therefore $S/\Theta_{t \odot s, s} \models (a = b)$ or, in other wrods, $(a, b) \in  \Theta_{t \odot s, s}$ and then $\Theta_{t \odot s, s}  = \theta$.
\end{proof}

\begin{lemma}\label{lema:logical_cosequences}
Let $S$ be a join semilattice, $(t, s)$ and $(u, v)$ two pairs of elments of $S$ such that $S\models (u \not\leq v)$. Then $S/\Theta_{t \odot s, s} \models (u \leq v)$ if and only if\[
S\models \big((u \le v \odot t)\wedge (s\le v)\big).\]
\end{lemma}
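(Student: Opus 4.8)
The plan is to use the explicit description of the principal congruence $\Theta_{t \odot s, s}$ provided by Theorem~\ref{lema:congFormula}, which converts the statement $S/\Theta_{t \odot s, s} \models (u \leq v)$ into a concrete condition about the existence of a $\theta$-chain, and then collapse that chain using the semilattice structure. Recall first that in a join semilattice, $u \leq v$ is shorthand for $u \odot v = v$, so the claim $S/\Theta_{t \odot s, s} \models (u \leq v)$ is the same as $(u \odot v, v) \in \Theta_{t \odot s, s}$. Since we are told $S \models (u \not\leq v)$, the pair $(u \odot v, v)$ is a genuinely non-diagonal pair, so the characterization from Theorem~\ref{lema:congFormula} applies in its nontrivial form.

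First I would handle the easy direction ($\Leftarrow$). Assuming $S \models (u \le v \odot t)$ and $S \models (s \le v)$, I want to show $(u \odot v, v) \in \Theta_{t \odot s, s}$. By Theorem~\ref{lema:congFormula} it suffices to verify the two conditions $s \le u \odot v$, $s \le v$ together with $t \odot (u \odot v) = v \odot t$. The condition $s \le v$ is given, and it forces $s \le u \odot v$ since $v \le u \odot v$. For the term equality, from $u \le v \odot t$ we get $t \odot u \le t \odot v$, hence $t \odot u \odot v = t \odot v$; combined with $v \le u \odot v$ this yields $t \odot (u \odot v) = t \odot v = v \odot t$, as required.

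Next I would treat the forward direction ($\Rightarrow$), which is the main obstacle. Here I assume $(u \odot v, v) \in \Theta_{t \odot s, s}$ and must recover both $u \le v \odot t$ and $s \le v$. Since $S \models (u \not\leq v)$ means $u \odot v \ne v$, the pair is non-diagonal, so Theorem~\ref{lema:congFormula} hands me directly the facts $s \le v$ and $t \odot (u \odot v) = t \odot v$ (the latter rewriting $t \odot a = b \odot t$ with $a = u\odot v$, $b = v$). The first of these is already one of the two conclusions. For the second conclusion, I expand $t \odot (u \odot v) = t \odot u \odot v$, so the equation $t \odot u \odot v = t \odot v$ gives $t \odot u \le t \odot v \le v \odot t$; thus $u \le t \odot u \le v \odot t$, establishing $u \le v \odot t$. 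The delicate point to check carefully is that the non-diagonal branch of the characterization is the applicable one and that no degenerate case (e.g. $S \models (t \le s)$, where $\Theta_{t\odot s,s}=\Delta$) sneaks in; but the hypothesis $u \not\le v$ rules out the pair lying on the diagonal, so the term-equality condition is genuinely available, and the two implications close the biconditional.
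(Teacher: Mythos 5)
Your proposal is correct and takes essentially the same approach as the paper: the nontrivial direction applies Theorem~\ref{lema:congFormula} to the non-diagonal pair $(u \odot v, v)$, extracting $s \le v$ and the term equality $t \odot (u\odot v) = v \odot t$, exactly as the paper does. The only immaterial difference is in the converse direction, where you verify membership of $(u \odot v, v)$ in $\Theta_{t \odot s, s}$ via the explicit formula of Theorem~\ref{lema:congFormula}, whereas the paper computes directly in the quotient using $t \odot s = s$; both arguments are valid.
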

\begin{proof} 
From right to left: $(u \le v \odot t)\wedge (s\le v)$ and $t \odot s = s$ implies $u \leq v$. From left to right, since $S\models (u \not\leq v)$ but $S/\Theta_{t \odot s, s} \models (u \leq v)$ then, according to Theorem  \ref{lema:congFormula}, the pair $(v, u \odot v)$ must satisfy $s \le v$ and $t \odot v =u \odot v \odot t$, which is equivalent to $u \le v \odot t$.
\end{proof} 

For Lemmas \ref{lema:congFormula} and \ref{lema:logical_cosequences}, similar results hold for meet semilattices with the order relation reversed, that is, replacing $\le$ with $\ge$. \\

Consider the proof of \ref{semilattcieCongreuenceCrosssing} and suppose our congruences are no longer maximal. It remains true that: \[
S/(\cap_{i \in \nu} \Phi_{i}) \,\, \models \,\, (q = q \odot s = q \odot t \odot s = p \odot q).
\] 
Applying Lemma \ref{lema:logical_cosequences} with $u = p$ and $v = q$ we obtain: \[
S/\Psi_{j*} \,\, \models \,\, (q = q \odot s \lneq q \odot t \odot s \geq p \odot q).
\]
However, for the case $S/\Psi_{j} \models (p \odot q = q)$ the generalization of the statement $S/\psi_{j} \, \models \, ((s \lneq t \odot s) \text{ and } (s \leq q = p \odot q \leq t \odot s) )$ to arbitrary congruences no longer holds as there may be more than two classes in $\Psi_{j}$. In fact, the identity of  \ref{semilattcieCongreuenceCrosssing} fails for arbitrary congruences. 
We can fix this problem by introducing every pair of the form $(\Psi_{j^{\prime}} \cap \Psi_{j}) \vee \Theta_{t \odot s, s}$ in the intersection at the right-hand-side of the identity of \ref{semilattcieCongreuenceCrosssing} (obtaining the identity \ref{generalizedSemilattcieCongreuenceCrosssing}), as this forces $S/\Psi_{j} \models \{(s \leq q)  \text{ and } (p\odot q \leq q \odot t )\}$  for every $j$ (which can also be established using Lemma \ref{lema:logical_cosequences}), and leads to an alternative proof for \ref{generalizedSemilattcieCongreuenceCrosssing} and \ref{PWDlaw}.
\bigskip

\section*{Acknowledgements}

We are thankful to Joao Araujo and David Mendez for discussions and for reviewing the document. This work was supported by Champalimaud Research, Lisbon, and by the H2020 ICT48 project ALMA: Human Centric Algebraic Machine Learning (grant 952091).

\begin{bibdiv}
    \begin{biblist}

        \bib{BurSan81}{book}{
            author={Burris, Stanley},
            author={Sankappanavar, H.~P.},
            title={A course in universal algebra},
            series={Graduate Texts in Mathematics},
            publisher={Springer-Verlag, New York-Berlin},

            date={1981},
            volume={78},
            ISBN={0-387-90578-2},
        }

        \bib{McKen87}{book}{
	  author={McKenzie, Ralph},
	  author={McNulty, George F.},
	  author={Taylor, Walter F.},
	  title={Algebras, lattices, varieties, Volume I},
	  publisher={Wadsworth \& Brooks/Cole},
	  date={1987},
	  series={Wadsworth \& Brooks/Cole Mathematics Series},
	  ISBN={0-534-076513},
	}

        \bib{Pap64}{article}{
            author={Papert, Dona},
            title={{Congruence relations in semi-lattices}},
            journal={Journal of the London Mathematical Society},
            year={1964},
            pages={723--729},
            volume={39},
            number={1},
        }

	\bib{FreNat79}{article}{
	  author={Freese, Ralph},
	  author={Nation, J.~B.},
	  title={{Congruence lattices of semilattices}},
	  date={1979},
	  journal={Pacific Journal of Mathematics},
	  volume={82},
	  number={2},
	  pages={411--418},
	}

        \bib{MarPol21}{article}{
        author={Martin-Maroto, Fernando},
        author={G.~de Polavieja, Gonzalo},
        title={{Finite Atomized Semilattices}},
        date={2021-02},
        journal={ArXiv e-prints},
        note={\href{https://arxiv.org/abs/2102.08050}{{\tt arXiv:2102.08050
        [math.RA]}}},
        }
	
        \bib{infAS}{article}{
        author={Martin-Maroto, Fernando},
        author={Ricciardo, Antonio},
        author={Mendez, David},
        author={G.~de Polavieja, Gonzalo},
        title={{Infinite Atomized Semilattices}},
        date={2023-11},
        journal={ArXiv e-prints},
        note={\href{https://arxiv.org/abs/2311.01389}{{\tt arXiv:2311.01389
        [math.RA]}}},
        }

    \end{biblist}
\end{bibdiv}

\end{document}